\theoremstyle{plain}
\newtheorem{thm}{Theorem}[section]
\newtheorem{theorem}[thm]{Theorem}
\newtheorem{proposition}[thm]{Proposition}
\newtheorem{corollary}[thm]{Corollary}
\theoremstyle{definition}
\newtheorem{definition}[thm]{Definition}
\newtheorem{remark}[thm]{Remark}
\newtheorem{example}[thm]{Example}
\newtheorem{problem}[thm]{Problem}
\newtheorem{thevarthm}[thm]{\varthmname}
\newenvironment{varthm*}[1]{\trivlist\item[]{\bf #1.}\it}{\endtrivlist}
\renewcommand\geq{\geqslant}
\renewcommand\leq{\leqslant}
\newcommand\be{\begin{eqnarray*}}
\newcommand\ee{\end{eqnarray*}}
\newcommand\newop[2]{\def#1{\mathop{\rm #2}\nolimits}}
\newop\log{log}
\newop\ord{ord}
\newop\Gal{Gal}
\newop\SL{SL}
\newop\Bl{Bl}
\newop\mult{mult}
\newop\mass{mass}
\newop\div{div}
\newop\codim{codim}
\newop\sing{sing}
\newop\vdim{vdim}
\newop\edim{edim}
\newop\Ass{Ass}
\newop\size{size}
\newop\reg{reg}
\newop\satdeg{satdeg}
\newop\supp{supp}
\newop\Neg{Neg}
\newop\Nef{Nef}
\newop\Nefh{Nef_H}
\newop\Eff{Eff}
\newop\Zar{Zar}
\newop\MB{MB}
\newop\MBxC{MB\mathit{(x,C)}}
\newop\NnB{NnB}
\newop\Bigg{Big}
\newop\Effbar{\overline{\Eff}}
\def\keywordname{{\bfseries Keywords}}%
\def\keywords#1{\par\addvspace\medskipamount{\rightskip=0pt plus1cm
\def\and{\ifhmode\unskip\nobreak\fi\ $\cdot$
}\noindent\keywordname\enspace\ignorespaces#1\par}}
\def\subclassname{{\bfseries Mathematics Subject Classification
(2020)}\enspace}
\def\subclass#1{\par\addvspace\medskipamount{\rightskip=0pt plus1cm
\def\and{\ifhmode\unskip\nobreak\fi\ $\cdot$
}\noindent\subclassname\ignorespaces#1\par}}
\begin{document}
\title{Supersolvable resolutions of line arrangements }
\author{Jakub Kabat}
\date{\today}
\maketitle
\thispagestyle{empty}
\begin{abstract}
The main purpose of the present paper is to study the numerical properties of supersolvable resolutions of line arrangements. We provide upper-bounds on the so-called extension to supersolvability numbers for certain extreme line arrangements in $\mathbb{P}^{2}_{\mathbb{C}}$ and we show that these numbers \textbf{are not} determined by the intersection lattice of the given arrangement. 
\keywords{supersolvable line arrangements}
\subclass{52C35, 14N20, 13D45}
\end{abstract}

\section{Introduction}
The present note is strongly motivated by an interesting article due to Ziegler \cite{Ziegler}, where he studied Terao's conjecture in arbitrary characteristics. This conjecture states that the freeness of the module of logarithmic vector fields of an arrangement should depend only on the intersection poset. This conjecture is widely open and difficult to verify. In most cases, interesting examples of free hyperplane arrangements are rigid, i.e., the arrangements are projectively uniquely, defined up to an automorphism of ${\rm PGL}(n+1,\mathbb{K})$, and because of that they cannot lead to potential counterexamples to Terao's conjecture. In order to study the aforementioned conjecture, Ziegler introduced the notion of supersolvable resolutions of arrangements. The key advantage of this construction is that it depends on the embedding of the arrangement and thus on the specific representation of the intersection poset.
\begin{definition}
Let $\mathcal{A} \subset \mathbb{P}^{n}_{\mathbb{K}}$ be an arrangement of hyperplanes and denote by $L(\mathcal{A})$ the intersection lattice of $\mathcal{A}$. We say that $\mathcal{A}$ is supersolvable if $L(\mathcal{A})$ is supersolvable as a lattice.
\end{definition}
As proved by Jambu and Terao in \cite{JT}, supersolvable hyperplane arrangements are free and their freeness is determined by the combinatorics. This is the main reason why supersolvable hyperplane arrangements play an important role in the world of free arrangements. Based on that, we can formulate the main definition of the present note.
\begin{definition}
Let $\mathcal{A} \subset \mathbb{P}^{n}_{\mathbb{K}}$ be a hyperplane arrangement. A supersolvable resolution of $\mathcal{A}$ is a finite sequence of arrangements
$$Y_{\bullet}: \quad X = Y_{0} \subseteq Y_{1} \subseteq  ... \subseteq Y_{k} = Y$$
such that $|Y_{i}| = |\mathcal{A}| + i$ for $0
\leq i \leq k$ and $Y$ is supersolvable. We will denote, in more specific situations, the resulting arrangement $Y$ as $\mathcal{A}^{RS}$.
\end{definition}
 It is very natural to wonder to what extent we can find supersolvable resolutions of arrangements efficiently, i.e., when the chains $Y_{\bullet}$ are essentially the shortest possible. This motivates the following definition.
\begin{definition}
Let $\mathcal{A} \subset \mathbb{P}^{n}_{\mathbb{K}}$ be an arrangement of hyperplanes. Then we define the extension to supersolvability number of $\mathcal{A}$ as 
$${\rm ext SS}(\mathcal{A}) = {\rm min} \{ k : Y_{\bullet} \text{ is a supersolvable resolution of } \mathcal{A} \text{ and  }Y_{k} \text{ is supersolvable}\}.$$

In the present paper we focus on the case of line arrangements in the complex projective plane and our main goal is to provide some numerical results on ${\rm extSS}$ numbers of certain line arrangements. It is worth pointing out that this setting is not very restrictive since even in that case it is very difficult to compute the actual values of ${\rm ext SS}(\mathcal{A})$ in general. 

\end{definition}
\begin{remark}(cf. \cite[Lemma 3.3]{Ziegler})
Let $\mathcal{A} \subset \mathbb{P}^{2}_{\mathbb{C}}$ be an arrangement of lines (we may assume that $\mathcal{A}$ is not supersolvable), then the number ${\rm ext SS}(\mathcal{A})$ is well-defined and finite.
\end{remark}
\begin{proof}
Let $\mathcal{A}$ be an arbitrary line arrangement and $P$ a general point in the plane. Let $\mathcal{B}$ be the arrangement consisting of all lines in $\mathcal{A}$ and lines joining $P$ with all intersection points of $\mathcal{A}$. Then $\mathcal{B}$ is supersolvable.
\end{proof}
It follows immediately that for any arrangement of lines $\mathcal{A}$ one has
$${\rm ext SS}(\mathcal{A}) \leq |{\rm Sing}(\mathcal{A})|.$$
Note that this upper bound is very rough. For example, if $\mathcal{A}$ is already supersolvable, then ${\rm ext SS}(\mathcal{A}) = 0$. It is thus natural to state the following fundamental question.
\begin{problem}
	Find ${\rm ext SS}(\cdot)$ numbers for relevant line arrangements in $\mathbb{P}^{2}_{\mathbb{C}}$.
\end{problem}
Our investigations on ${\rm extSS}$ numbers start from certain B\"or\"oczky's arrangements of lines. These are line arrangements defined over the reals which have the maximal possible number of triple intersection points, according to Green-Tao's result. We point out, by giving an explicit example of two line arrangement which have the same weak combinatorics, that the problem of finding ${\rm extSS}$ numbers is not a lattice dependent task, and this is the reason why we can provide only upper bounds for ${\rm extSS}$'s in the course of the paper. Furthermore, we present estimates on the ${\rm extSS}$ numbers for classical reflection line arrangements, namely the Klein and the Wiman arrangements of lines. In the last section, we discuss a possible application of our work towards the theory of unexpected curves in the complex projective plane.

\section{Preliminaries}
 Let $\mathbb{K}$ be any field. Consider $\mathcal{A} = \{\ell_{1}, ..., \ell_{d}\} \subset \mathbb{P}^{2}_{\mathbb{K}}$ an arrangement of $d\geq 3$ lines. For each line $\ell_{i}$ we choose a linear form $\alpha_{i} \in S:=\mathbb{K}[x,y,z]$ such that $\ell_{i} = {\rm ker}(\alpha_{i})$. Now we can define the module of $\mathcal{A}$-derivations as
$$D(\mathcal{A}) = \{ \theta \in {\rm Der}_{\mathbb{K}}(S) \, : \, \theta(\alpha_{i}) \in \langle \alpha_{i} \rangle \text{ for all } i \in \{1, ...,d\}\},$$
where ${\rm Der}_{\mathbb{K}}(S) = S\cdot \partial_{x} \oplus S \cdot \partial_{y} \oplus S \cdot \partial_{z}$.
\begin{definition}
We say that an arrangement $\mathcal{A} \subset \mathbb{P}^{2}_{\mathbb{C}}$ is free when $D(\mathcal{A})$ is a free $S$-module. In this case, the degrees of the generators of $D(\mathcal{A})$ are called the exponents.
\end{definition}
Let us recall that the Poincar\'e polynomial of an arrangement $\mathcal{A} \subset \mathbb{P}^{2}_{\mathbb{C}}$ can be defined (you may consider this as a small exercise) by
$$\pi(\mathcal{A},t) = 1 + |\mathcal{A}|\cdot t + \bigg(\sum_{r\geq 2}(r-1)t_{r} \bigg)\cdot t^{2} + \bigg(\sum_{r\geq 2}(r-1)t_{r} + 1 - |\mathcal{A}|\bigg)\cdot t^{3},$$
where $t_{r}$ is the number of $r$-fold points, i.e., points in the plane where exactly $r$ lines from the arrangement meet. If the arrangement $\mathcal{A}$ is free, then by Terao's factorization theorem \cite{Terao} the Poincar\'e polynomial splits over $\mathbb{Z}$ into linear factors, i.e.,
$$\pi(\mathcal{A},t) = \prod_{i=1}^{3}(1+d_{i}\cdot t),$$
and the numbers $(d_{1},d_{2},d_{3})$ are the exponents. Since the Euler derivation $E := x\partial_{x} + y\partial_{y} + z\partial_{z}$ always sits in $D(\mathcal{A})$, then $E$ is always one of the generators of $D(\mathcal{A})$ giving $d_{1} = 1$, and due to this reason it is customarily to call $(d_{2},d_{3})$ the exponents of $\mathcal{A}$.
\section{${\rm extSS}$ numbers for line arrangements in the complex plane}
In this section, we compute the actual values and upper bounds on extensions to supersolvability numbers. We start with a somewhat extreme class of line arrangements in the complex projective plane. We say that $\mathcal{A}_{d} \subset \mathbb{P}^{2}_{\mathbb{C}}$ is general with $d\geq 3$ if the lines are in general position and the only intersection points are double points. 

\begin{proposition}
Let $\mathcal{A} \subset \mathbb{P}^{2}_{\mathbb{C}}$ be an arrangement of $d$ general lines. Then $${\rm ext SS}(\mathcal{A}) \leq \binom{d-2}{2}.$$
\end{proposition}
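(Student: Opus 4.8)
The plan is to reduce everything to the standard lattice-theoretic criterion for planar supersolvability: a line arrangement $\mathcal{A} \subset \mathbb{P}^2_{\mathbb{C}}$ is supersolvable if and only if it has a \emph{modular point}, i.e.\ an intersection point $p$ with the property that for every other point $q \in {\rm Sing}(\mathcal{A})$ the line $\overline{pq}$ is a member of $\mathcal{A}$. Since the lines of $\mathcal{A}$ are in general position, no three are concurrent, so ${\rm Sing}(\mathcal{A})$ consists of exactly the $\binom{d}{2}$ double points $\ell_i \cap \ell_j$. First I would single out one of them, say $p = \ell_1 \cap \ell_2$, and count how many lines have to be adjoined to force $p$ to become modular.

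The bookkeeping is clean. Every singular point lying on $\ell_1$ or on $\ell_2$ is already joined to $p$ by one of these two lines and so imposes no condition. The points lying off $\ell_1 \cup \ell_2$ are precisely the $\ell_i \cap \ell_j$ with $3 \le i < j \le d$, and counting them gives $\binom{d}{2} - (2d-3) = \binom{d-2}{2}$. The resolution I propose is simply to adjoin, one line at a time, the joins $\overline{pq}$ as $q$ runs over these off-axis points. This yields a chain $Y_\bullet$ of the shape required by the definition, of length at most $\binom{d-2}{2}$ — and strictly shorter if some of the off-axis points happen to be collinear with $p$.

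The step that needs genuine justification, and which I regard as the heart of the argument, is that the terminal arrangement $Y$ is actually supersolvable: one must verify that $p$ is modular in $Y$ itself, not merely relative to the original singular points, since adjoining lines creates new intersection points. Here the decisive feature is that every adjoined line passes through $p$. Hence any two adjoined lines meet only at $p$, while an adjoined line meets an original line $\ell_i$ at a point that lies on that adjoined line and is therefore already joined to $p$. Combining this with the two cases above, every point of ${\rm Sing}(Y) \setminus \{p\}$ lies on a line of $Y$ through $p$, so $p$ is modular and $Y$ is supersolvable. This gives the stated bound ${\rm ext SS}(\mathcal{A}) \le \binom{d-2}{2}$.
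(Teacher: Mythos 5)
Your proof is correct and follows essentially the same construction as the paper: fix a double point $p = \ell_1 \cap \ell_2$ and adjoin the lines joining $p$ to the $\binom{d-2}{2}$ singular points not lying on $\ell_1 \cup \ell_2$. Your explicit verification that $p$ remains modular in the enlarged arrangement (since all new intersection points lie on lines through $p$) makes precise a step the paper's proof leaves implicit, but the approach is the same.
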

\begin{proof}
Indeed, let $P$ be any double intersection point of $\mathcal{A}$ and let $\ell, m$ be the lines from $\mathcal{A}$ intersecting at $P$. We denote by $\mathcal{A}' = \mathcal{A} \setminus\{\ell, m\}$. Then we need to join singular points of $\mathcal{A}'$ with $P$. There are exactly $\binom{d-2}{2}$ such points.
\end{proof}

Here is a good moment to explain why we can provide, in general, only upper bounds on the ${\rm extSS}$ numbers.

\begin{example}
We are going to present an example which shows that ${\rm extSS}$ numbers cannot be computed directly from the (weak) combinatorics. Consider the poset $L$ defining the combinatorics of $6$ lines intersecting only at double intersection points. The space of all possible geometric realizations $\mathcal{M}$ of $L$ is extremely large, and this is the key spot in our example. 

If we take an arrangement $\mathcal{L}$ consisting of $6$ general lines with the equations listed below:
\begin{equation*}
\begin{array}{l}
\ell_{1} : x - y + 2z = 0, \\
\ell_{2} : x - y - 2z = 0, \\
\ell_{3} : x + y - 2z = 0, \\
\ell_{4} : x + y + 2z = 0, \\
\ell_{5} : 9x - y + 9z = 0, \\
\ell_{6} : 9x + y -9z = 0.
\end{array}
\end{equation*}

Then one has
$${\rm extSS}(\mathcal{L}) = 6.$$
The claim follows from the fact that there is no triple of double intersection points which lie on a line $\ell$ such that $\ell \not\in \mathcal{L}$. This justifies the equality above. 

Consider now another element $\mathcal{P}$ from the space of realizations depicted on Figure \ref{pap}. This arrangement is constructed with the use of the Pappus theorem, the only difference is that we remove those three lines that are making the intersection points triple in the configuration. By this trick, we obtain an honest arrangement of $6$ lines with only double intersection points. It is easy to see that in this case
$${\rm extSS}(\mathcal{P}) < 6,$$
and it follows from the fact that we have an additional collinearity, denoted by the dashed line, which existence is guarantee by the Pappus theorem.
\begin{figure}[h!]
\centering
\definecolor{qqqqff}{rgb}{0.,0.,1.}
\definecolor{ffqqqq}{rgb}{1.,0.,0.}
\definecolor{ududff}{rgb}{0.30196078431372547,0.30196078431372547,1.}
\begin{tikzpicture}[line cap=round,line join=round,>=triangle 45,x=1.0cm,y=1.0cm,scale=1.1]
\clip(4.633828980663498,-5.4382516190204475) rectangle (18.65398559101832,1.7116991406029973);
\draw [line width=2.pt,domain=4.633828980663498:18.65398559101832] plot(\x,{(--44.-5.*\x)/4.});
\draw [line width=2.pt,domain=4.633828980663498:18.65398559101832] plot(\x,{(--45.-5.*\x)/5.});
\draw [line width=2.pt,domain=4.633828980663498:18.65398559101832] plot(\x,{(--57.-5.*\x)/-3.});
\draw [line width=2.pt,domain=4.633828980663498:18.65398559101832] plot(\x,{(--61.-5.*\x)/1.});
\draw [line width=2.pt,domain=4.633828980663498:18.65398559101832] plot(\x,{(--68.-5.*\x)/-2.});
\draw [line width=2.pt,domain=4.633828980663498:18.65398559101832] plot(\x,{(--65.-5.*\x)/-5.});
\draw [line width=2.pt,dash pattern=on 2pt off 2pt,color=ffqqqq,domain=4.633828980663498:18.65398559101832] plot(\x,{(--0.4761904761904816-0.4761904761904763*\x)/2.3809523809523796});
\begin{scriptsize}
\draw [fill=ududff] (8.,1.) circle (2.5pt);
\draw [fill=ududff] (12.,1.) circle (2.5pt);
\draw [fill=ududff] (14.,1.) circle (2.5pt);
\draw [fill=ududff] (9.,-4.) circle (2.5pt);
\draw [fill=ududff] (12.,-4.) circle (2.5pt);
\draw [fill=ududff] (13.,-4.) circle (2.5pt);
\draw [fill=ududff] (10.285714285714286,-1.8571428571428572) circle (2.5pt);
\draw [fill=ududff] (11.,-2.) circle (2.5pt);
\draw [fill=ududff] (12.666666666666666,-2.3333333333333335) circle (2.5pt);
\end{scriptsize}
\end{tikzpicture}
\caption{A Pappus realization of the combinatorics of $d=6$ lines and $15$ double points.}
\label{pap}
\end{figure}
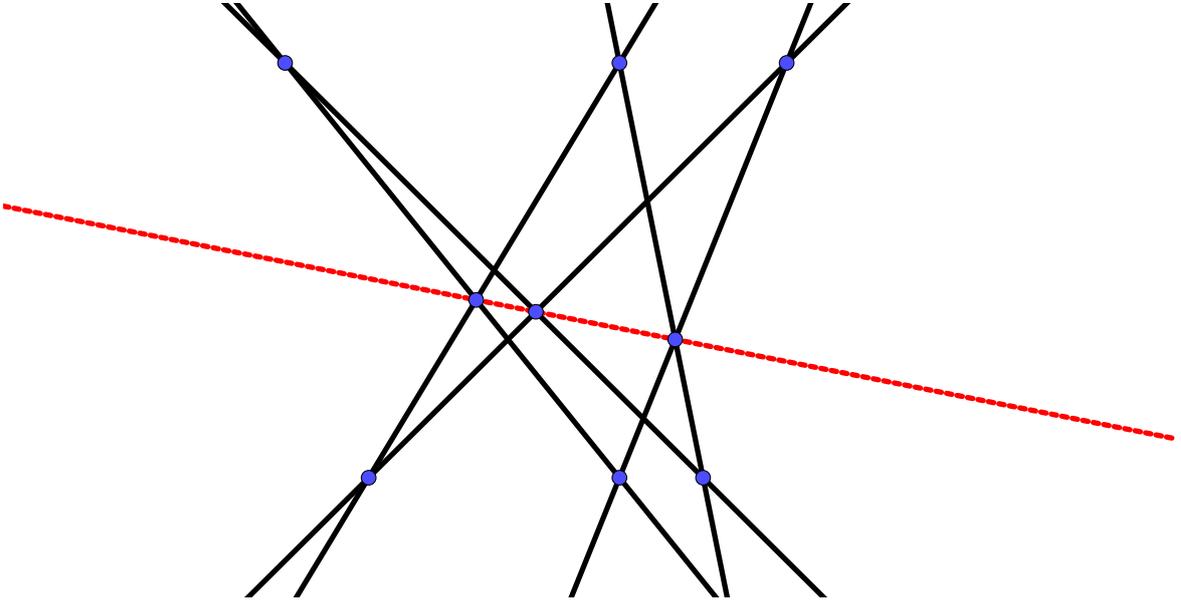

It means that the problem of calculating ${\rm extSS}$ numbers for arrangements does not depend exclusively on the intersection lattice, i.e., if the parameter space of a given arrangement $\mathcal{A}$ (which is nothing else as a space of geometric realizations) is positive dimensional, then we can only hope to find reasonable upper-bounds for ${\rm extSS}(\mathcal{A})$ based on the weak combinatorics.
\end{example}

Now we pass to B\"or\"oczky arrangements of lines. The construction goes as follows. Consider a regular $2n$-gon inscribed in the unit circle in the real affine plane.
   Let us fix one of the $2n$ vertices and denote it by $Q_0$.
   By $Q_{\alpha}$ we denote the point arising by the rotation of $Q_0$ around the center of the circle by angle $\alpha$.

   Then we take the following set of lines
$$\mathcal{B}_{n}=\left\{Q_{\alpha}Q_{\pi - 2\alpha}, \text{ where } \alpha= \frac{2k \pi}{n} \textrm{ for } k=0, \dots, n-1\right\}.$$
   If $\alpha \equiv (\pi - 2\alpha)({\rm mod}\; 2\pi)$,
   then the line $Q_{\alpha}Q_{\pi - 2\alpha}$ is the tangent to the circle at the point $Q_{\alpha}$.
   The arrangement $\mathcal{B}_{n}$ has $\big\lfloor \frac{n(n-3)}{6}\big\rfloor+1$ triple points by \cite[Property 4]{FuPa1984}, and exactly $n-3 + \varepsilon(n)$ double intersection points, where $\varepsilon(n)$ is equal to either $2$ or $0$, depending on the divisibility of $n$. 
   Let us denote the set of these triple points by $\mathbb{T}_n$.
   
Now we recall a simple fact concerning the distribution of triple points    on the arrangement lines.
\begin{proposition}\label{prop:triple point on B lines}
   Every line in the $\mathcal{B}_n$ arrangement contains at least $\big\lfloor\frac{n-3}{2}\big\rfloor$ triple points
   and there exists a line containing at least one more triple point.
\end{proposition}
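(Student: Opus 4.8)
The plan is to reduce the geometric concurrence of three lines of $\mathcal{B}_n$ to a single congruence in $\mathbb{Z}/n\mathbb{Z}$, and then to turn the proposition into an elementary counting problem. First I would record the equation of a chord of the unit circle: the chord joining $e^{i\phi}$ and $e^{i\psi}$ consists of those $z\in\mathbb{C}$ satisfying $z + e^{i(\phi+\psi)}\bar z = e^{i\phi}+e^{i\psi}$, which is a genuine real line because the two coefficients have equal modulus. Writing $\zeta = e^{2\pi i/n}$ and $\alpha_k = \tfrac{2k\pi}{n}$, the endpoints of the line $\ell_k = Q_{\alpha_k}Q_{\pi-2\alpha_k}$ are $\zeta^{k}$ and $-\zeta^{-2k}$, so $\ell_k$ has the normal form $z - \zeta^{-k}\bar z = \zeta^{k}-\zeta^{-2k}$.

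Next I would express the concurrence of $\ell_{k_1},\ell_{k_2},\ell_{k_3}$ as the vanishing of the $3\times 3$ determinant whose $j$-th row is $(1,\,-\zeta^{-k_j},\,\zeta^{k_j}-\zeta^{-2k_j})$, treating $z$ and $\bar z$ as independent unknowns in the resulting linear system. A direct expansion factors this determinant as a nonzero Vandermonde-type product times the factor $(1-\zeta^{\,k_1+k_2+k_3})$. Since the three indices are distinct, the Vandermonde factor never vanishes, and hence the three lines meet at a common point if and only if $k_1+k_2+k_3 \equiv 0 \pmod n$. I expect this determinant expansion to be the main technical step; everything afterwards is bookkeeping.

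With the criterion in hand, fix a line $\ell_k$. Its triple points correspond to unordered pairs $\{a,b\}$ of distinct indices, both different from $k$, with $a+b\equiv -k \pmod n$. I would first observe that distinct such pairs give distinct points: if two pairs produced the same point, a fourth line would pass through it, but the criterion forces at most three lines through any concurrence point (given two of them, the third index is uniquely determined). Counting the ordered solutions of $a+b\equiv -k$, of which there are exactly $n$, then removing the at most two diagonal solutions $2a\equiv -k$ and the solution(s) in which $a$ or $b$ equals $k$, and dividing by two, yields the number of triple points on $\ell_k$ as an explicit function of the parity of $n$, of the parity of $k$ when $n$ is even, and of whether $3k\equiv 0 \pmod n$.

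Finally I would read off the extremes from this count. In every case the minimum value over all $k$ is exactly $\big\lfloor\tfrac{n-3}{2}\big\rfloor$, which gives the uniform lower bound. The inequality is strict for the special lines: when $n$ is odd the line $\ell_0$ (where $3k\equiv 0$) carries $\tfrac{n-1}{2}$ triple points, and when $n$ is even any odd-indexed line carries $\tfrac{n}{2}-1$ of them, in both cases one more than $\big\lfloor\tfrac{n-3}{2}\big\rfloor$. This exhibits the required extra line and completes the argument.
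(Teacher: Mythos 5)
Your proof is correct, but it takes a genuinely different route from the paper's. The paper's argument is a short double count: it imports the total number of triple points, $\big\lfloor\frac{n(n-3)}{6}\big\rfloor+1$, from F\"uredi--Pal\'asti \cite{FuPa1984}, asserts ``by construction'' that these points are distributed almost uniformly (the numbers of triple points on any two lines differ by at most $1$), and then traps the minimal per-line count $s$ between the two inequalities $\frac{sn}{3}\leq 1+\big\lfloor\frac{n(n-3)}{6}\big\rfloor$ and $\big\lfloor\frac{n(n-3)}{6}\big\rfloor\leq\frac{(s+1)n}{3}$. You instead re-derive the mechanism that underlies both of those imported facts: your chord equation $z-\zeta^{-k}\bar z=\zeta^{k}-\zeta^{-2k}$ is right (and degenerates correctly to the tangent lines), and your determinant does factor as a nonvanishing Vandermonde expression in the $\zeta^{-k_j}$ times $1-\zeta^{k_1+k_2+k_3}$, giving the classical concurrence criterion $k_1+k_2+k_3\equiv 0\pmod n$; the per-line count of solutions of $a+b\equiv -k\pmod n$ then produces the exact number of triple points on each individual line ($\frac{n-3}{2}$ or $\frac{n-1}{2}$ for odd $n$, $\frac{n}{2}-2$ or $\frac{n}{2}-1$ for even $n$), and your extremal cases match $\big\lfloor\frac{n-3}{2}\big\rfloor$ and $\big\lfloor\frac{n-3}{2}\big\rfloor+1$ as claimed. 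Your four-lines argument correctly shows both that distinct admissible pairs give distinct points and that each concurrence point is an honest triple (not higher) point. What your approach buys: it is self-contained (it proves, in sharp exact form, the near-uniformity that the paper merely asserts, and it reproves the cited total count as a byproduct) and it identifies exactly which lines are extremal. What the paper's approach buys: brevity, at the price of leaning on the citation and on an unproved distribution claim. Two routine points you should still write out in a full version: (i) treating $z$ and $\bar z$ as independent unknowns requires a reality check, namely that when the $3\times 3$ determinant vanishes the unique solution $(w_1,w_2)$ of the complexified system satisfies $w_2=\bar w_1$; this follows because each equation is self-conjugate (conjugating and multiplying by $-\zeta^{-k}$ returns the same equation), so $(\bar w_2,\bar w_1)$ is also a solution and uniqueness applies; and (ii) the overlap bookkeeping in the congruence count, where the diagonal solution $2a\equiv -k$ coincides with the excluded solution $a=k$ precisely when $3k\equiv 0\pmod n$, which your case distinctions acknowledge implicitly but should state explicitly.
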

\proof
   By construction the triple points are distributed on the arrangement lines almost uniformly,
   that means that the difference between the number of points from $\mathbb{T}_n$ on two arrangement lines
   is at most $1$. Let $s$ be the minimal number of triple points on an arrangement line.
   Then it must be
   $$\frac{sn}{3}\leq 1 +\big\lfloor \frac{n(n-3)}{6}\big\rfloor\;\mbox{ and }\;
      \big\lfloor \frac{n(n-3)}{6}\big\rfloor\leq \frac{(s+1)n}{3}$$
   and the claim follows.
\endproof
   From the above result we can also derive the following consequence of Proposition \ref{prop:triple point on B lines}, which is interesting on its own right.
\begin{corollary}\label{cor:alpha for Bn}
   For a fixed $n\geq 8$ let $C$ be a plane curve (possibly reducible and non-reduced) of degree $d$ passing through every point
   in the set $\mathbb{T}_n$ with multiplicity at least $3$. Then $d\geq n$. Moreover, if $d=n$, then $C$ is the union of
   all arrangement lines in $\mathcal{B}_n$.
\end{corollary}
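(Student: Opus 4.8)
The plan is to control $\deg C$ by intersecting $C$ with the lines of $\mathcal{B}_n$, and more decisively with the whole arrangement, and then reading off the resulting B\'ezout inequalities with the help of Proposition~\ref{prop:triple point on B lines}. For a line $\ell$ write $t(\ell)$ for the number of points of $\mathbb{T}_n$ lying on it. If $\ell$ is not a component of $C$, then $C\cap\ell$ has length $d$, and since $\mult_{P}(C)\geq 3$ at each of the $t(\ell)$ triple points $P\in\ell$, B\'ezout gives
\[
d \;=\; C\cdot\ell\;\geq\;3\,t(\ell)\;\geq\;3\Bigl\lfloor\tfrac{n-3}{2}\Bigr\rfloor .
\]
This already forces $d\geq n$ for most values of $n$, but it is slightly too weak at the bottom of the range (for instance $n=8$ and $n=10$), which is why I would not rely on a single line.

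The decisive step is to intersect $C$ with the full arrangement $\mathcal{B}_n$, viewed as a curve of degree $n$. Suppose first that $C$ contains no line of $\mathcal{B}_n$. Then $C$ and $\mathcal{B}_n$ share no component, and at each triple point $P$ one has $\mult_{P}(\mathcal{B}_n)=3$, so that $(C\cdot\mathcal{B}_n)_{P}\geq \mult_{P}(C)\cdot\mult_{P}(\mathcal{B}_n)\geq 9$. Summing over $\mathbb{T}_n$ and applying B\'ezout yields
\[
d\,n\;=\;C\cdot\mathcal{B}_n\;\geq\;9\,|\mathbb{T}_n|\;=\;9\Bigl(\Bigl\lfloor\tfrac{n(n-3)}{6}\Bigr\rfloor+1\Bigr).
\]
Because $|\mathbb{T}_n|$ strictly exceeds $\tfrac{n(n-3)}{6}$, the right-hand side is larger than $n(n-1)$ for all $n\geq 7$ (and larger than $n^2$ once $n\geq 9$), so this single inequality already gives $d\geq n$ in this case, with the strict bound $d>n$ for $n\geq 9$. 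In particular, for $n\geq 9$ a degree-$n$ curve through $\mathbb{T}_n$ with multiplicities at least $3$ must contain at least one line of $\mathcal{B}_n$.

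It remains to treat the case where $C$ does contain some lines of $\mathcal{B}_n$, and to pin down the equality $d=n$. Here I would peel off the arrangement lines, writing $C=\mathcal{A}'\cdot C'$ where $\mathcal{A}'$ consists of the $k\geq 1$ arrangement lines contained in $C$ and $C'$ is a residual curve of degree $d-k$ containing no line of $\mathcal{B}_n$. At a triple point $P$ through which exactly $j_P\leq 3$ lines of $\mathcal{A}'$ pass, the hypothesis $\mult_P(C)\geq 3$ forces $\mult_P(C')\geq 3-j_P$; intersecting $C'$ (which shares no component with $\mathcal{B}_n$) with the whole arrangement then gives $(d-k)\,n\geq 9\,|\mathbb{T}_n|-3\sum_{\ell\in\mathcal{A}'}t(\ell)$, to be combined with the trivial bound $d\geq k$. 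The main obstacle is precisely this bookkeeping for intermediate values of $k$: the crude estimate $\sum_{\ell\in\mathcal{A}'}t(\ell)\leq k\max_{\ell}t(\ell)$ is not sharp enough, and one must exploit the almost-uniform distribution of $\mathbb{T}_n$ furnished by Proposition~\ref{prop:triple point on B lines}, together with the fact that a triple point meeting at most two component lines still demands positive multiplicity from the low-degree residual $C'$. Once $d=n$ is assumed, the strict inequalities above push $k$ up to $n$, leaving $C'$ of degree $0$, so that $C$ is the union of all $n$ lines of $\mathcal{B}_n$; the smallest case $n=8$, where $9\,|\mathbb{T}_8|=63$ lies only just below $n^2=64$, is the tightest point and would be closed by this residual analysis.
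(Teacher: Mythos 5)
Your first case is genuinely complete: if $C$ contains no line of $\mathcal{B}_n$, then $dn\geq 9|\mathbb{T}_n|>\tfrac{3}{2}n(n-3)\geq n(n-1)$ for $n\geq 7$, so $d\geq n$. But from there on the proposal is a plan rather than a proof. The case where $C$ contains some but not all arrangement lines --- which you yourself label ``the main obstacle'' --- is never carried out: you record the inequality $(d-k)n\geq 9|\mathbb{T}_n|-3\sum_{\ell\in\mathcal{A}'}t(\ell)$, note that the crude bound on $\sum_{\ell\in\mathcal{A}'}t(\ell)$ is insufficient, and then assert that the analysis ``would be closed''; likewise the equality statement ($d=n$ forces $C=\mathcal{B}_n$) is settled only by the assertion that ``the strict inequalities push $k$ up to $n$.'' This is a genuine gap, and it is located exactly where the content of the corollary lies for the borderline values $n=8,10$ that you correctly identify as the tight ones. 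To see that the deferred work is substantial: for $n=8$ one has $9|\mathbb{T}_8|=63$ and $t(\ell)\leq 3$, so your inequality gives $8(d-k)\geq 63-9k$, i.e.\ $8d+k\geq 63$; with $k\leq d\leq 7$ this forces $d=k=7$, and then for reduced $C$ any triple point on the eighth line lies on at most two components of $C$, contradicting multiplicity $\geq 3$ --- but ruling out \emph{non-reduced} unions (a doubled line can restore multiplicity $3$ at such a point) still requires a further incidence argument. None of this appears in your write-up.

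For comparison, the paper's proof is three lines long because of an observation you missed: under the contradiction hypothesis $d<n$ there is no bookkeeping over $k$ to do at all, since a curve of degree $d<n$ cannot contain all $n$ distinct lines, so some line $\ell$ is \emph{automatically} not a component, and the single-line B\'ezout bound $d\geq 3\lfloor\tfrac{n-3}{2}\rfloor$ applies to that $\ell$, disposing of every value of $k$ simultaneously. This yields a contradiction whenever $3\lfloor\tfrac{n-3}{2}\rfloor\geq n$, i.e.\ for odd $n\geq 9$ and even $n\geq 12$. Your diagnosis that the single-line bound is too weak at $n=8,10$ is correct, and it is in fact a criticism of the published proof itself: the paper passes from $n>3\lfloor\tfrac{n-3}{2}\rfloor$ to $n+3>3\lfloor\tfrac{n}{2}\rfloor$, which uses $\lfloor\tfrac{n-3}{2}\rfloor=\lfloor\tfrac{n}{2}\rfloor-1$, an identity valid only for odd $n$; for $n=8,10$ the claimed contradiction does not materialize. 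So your global-B\'ezout inequality is precisely the right supplement for those two values (and for the equality case $d=n$, which the paper's argument also leaves implicit) --- the problem is that you proposed it but did not execute it.
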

\proof
   Assume to the contrary, that $d<n$.
   By Proposition \ref{prop:triple point on B lines} an arrangement line $\ell$ contains at least $\big\lfloor\frac{n-3}{2}\big\rfloor$
   triple points. If $\ell$ is not a component of $C$, then it must be, by B\'ezout Theorem,
   $$n>d\geq 3\big\lfloor\frac{n-3}{2}\big\rfloor.$$
   It follows that
   $$n+3>3\lfloor\frac{n}{2}\rfloor,$$
   which contradicts the assumption $n\geq 8$.
\endproof
Using some particular symmetries of B\"or\"oczky arrangements of $n=6k$ lines with $k\geq 2$ we can show the following result. Observe in the meantime that for $k=1$ our arrangement $\mathcal{B}_{6}$ is supersolvable.
\begin{theorem}
Let $n=6k$ for $k\geq 2$. Then
$${\rm ext SS}(\mathcal{B}_{6k}) \leq 6k^{2} - 6k.$$
In our construction, the supersolvable resolution  $\mathcal{B}_{6k}^{RS}$ have $6k^2$ lines and the following combinatorics:
$$t_{3 + 6k^{2} - 6k } = 1, \quad t_{4} = 6(k-1)^{2}, \quad t_{3} = 15k-12, \quad t_{2} = 36k^{3}-72k^{2}+42k-3.$$
Finally, the exponents of free arrangement $\mathcal{B}_{6k}^{RS}$ are $d_{1} = 6k-3$, $d_{2} = 6k^{2}-6k+2$.
\end{theorem}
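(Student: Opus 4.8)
The plan is to use the standard characterization of supersolvability for plane line arrangements: $\mathcal{A} \subset \mathbb{P}^{2}_{\mathbb{C}}$ is supersolvable if and only if it admits a \emph{modular point}, i.e.\ a point $P \in {\rm Sing}(\mathcal{A})$ such that for every other singular point $q$ the line $\overline{Pq}$ already belongs to $\mathcal{A}$. Accordingly I would construct $\mathcal{B}_{6k}^{RS}$ by fixing one distinguished triple point $P \in \mathbb{T}_{6k}$ — chosen so that the dihedral symmetry of the regular $2n$-gon construction acts transparently around it — and then adjoining to $\mathcal{B}_{6k}$ every line $\overline{Pq}$ joining $P$ to a singular point $q$ of $\mathcal{B}_{6k}$ that is not already swept out by one of the three arrangement lines through $P$. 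The first point to verify is that this terminates in a supersolvable arrangement: any newly created singular point is the intersection of an adjoined line $\overline{Pq}$ with some other line, and since $\overline{Pq}$ passes through $P$, that point is automatically collinear with $P$ along a line of the enlarged arrangement. Hence $P$ becomes modular and no further lines are needed; the economy over the trivial bound ${\rm extSS}(\mathcal{A})\leq|{\rm Sing}(\mathcal{A})|$ comes precisely from choosing $P$ among the existing triple points, so that the three arrangement lines through $P$ dispatch many singular points for free and, more importantly, symmetry forces several singular points to lie on a common adjoined line.

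The heart of the argument, and the step I expect to be the main obstacle, is the exact directional count. Writing $m$ for the eventual multiplicity of $P$, the number of adjoined lines is $m-3$, so the bound ${\rm extSS}(\mathcal{B}_{6k}) \leq 6k^{2}-6k$ is equivalent to showing that the distinct lines $\overline{Pq}$, as $q$ ranges over ${\rm Sing}(\mathcal{B}_{6k}) \setminus \{P\}$, number exactly $m = 6k^{2}-6k+3$. Starting from the cardinalities $|\mathbb{T}_{6k}| = 6k^{2}-3k+1$ and $t_{2}(\mathcal{B}_{6k}) = 6k-3+\varepsilon(6k)$, I would use the almost-uniform distribution of triple points from Proposition \ref{prop:triple point on B lines}, together with the rigidity supplied by Corollary \ref{cor:alpha for Bn}, to control sector by sector how many singular points share a direction through $P$ and how many are already carried by the three lines of $\mathcal{B}_{6k}$ meeting at $P$. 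It is here that the hypothesis $n=6k$ is essential: the congruence $n \equiv 0 \pmod 6$ makes the rotational symmetry organize the triple points into orbits whose collinearities with $P$ can be listed explicitly, ruling out the accidental coincidences that would otherwise collapse or multiply the distinct directions.

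Once the count is secured the combinatorics of $\mathcal{B}_{6k}^{RS}$ follows by bookkeeping on how multiplicities change. The point $P$ acquires multiplicity $m = 6k^{2}-6k+3$, giving the single point $t_{3+6k^{2}-6k}=1$. Since two adjoined lines meet only at $P$, every singular point other than $P$ receives at most one adjoined line; thus an original triple point that lies on a newly adjoined line (but not on a line of $\mathcal{B}_{6k}$ through $P$) is promoted to a quadruple point, and I would verify that these yield $t_{4}=6(k-1)^{2}$, while the triple points left untouched and the original double points raised by one adjoined line give $t_{3}=15k-12$. Every remaining intersection — the surviving simple points of $\mathcal{B}_{6k}$ together with the fresh points cut out where an adjoined line crosses a line of $\mathcal{B}_{6k}$ not passing through $P$ — is a double point, giving $t_{2}=36k^{3}-72k^{2}+42k-3$. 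As an internal consistency check I would confirm the identity $\binom{6k^{2}}{2}=\sum_{r\geq 2}\binom{r}{2}t_{r}$, which the stated multiplicities indeed satisfy.

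Finally, freeness and the exponents are immediate. By the theorem of Jambu and Terao a supersolvable arrangement is free with combinatorially determined exponents, and for a plane arrangement of $d$ lines with a modular point of multiplicity $m$ the nontrivial exponents are $(m-1,\,d-m)$. With $d=6k^{2}$ and $m=6k^{2}-6k+3$ this gives $d_{2}=m-1=6k^{2}-6k+2$ and $d_{1}=d-m=6k-3$, exactly as claimed. Equivalently, one may feed the combinatorial data above into Terao's factorization theorem and check directly that $\pi(\mathcal{B}_{6k}^{RS},t)=(1+t)\bigl(1+(6k-3)t\bigr)\bigl(1+(6k^{2}-6k+2)t\bigr)$, which reproduces the coefficients $|\mathcal{A}|=6k^{2}$ and $\sum_{r\geq 2}(r-1)t_{r}=36k^{3}-48k^{2}+30k-7$ computed from the multiplicities.
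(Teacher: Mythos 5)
Your proposal is correct and follows essentially the same route as the paper: both constructions cone over a distinguished triple point $O$, adjoin the lines joining $O$ to every singular point not lying on the three arrangement lines through $O$ (making $O$ modular, hence the result supersolvable), and then perform the same multiplicity bookkeeping to obtain $t_{3+6k^{2}-6k}=1$, $t_{4}=6(k-1)^{2}$, $t_{3}=15k-12$, $t_{2}=36k^{3}-72k^{2}+42k-3$. The only cosmetic difference is that you read off the exponents from the modular-point formula $(m-1,\,d-m)$, whereas the paper factors the Poincar\'e polynomial explicitly --- a verification you also mention as an alternative.
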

\begin{proof}
Here we present a detailed sketch of our construction. Take one of the points of multiplicity $3$ of $\mathcal{B}_{6k}$ and denote this point by $O$. Take the three lines passing through $O$. Observe that each of the three lines contains exactly $3k$ singular points from the arrangement. Since the only intersection point of the three lines is $O$, then on the three lines we have altogether exactly $9k-2$ intersection points, among them exactly $3$ double points. Now we construct our extension $\mathcal{B}_{6k}^{RS}$ by joining $O$ with each singular point except those $9k-2$ lying on the three lines. Simple calculation tells us that we add the following number of lines
$$6k-3 + \frac{6k(6k-3)}{6} + 1 - (9k - 2) = 6k-3 + 6k^2 - 3k + 1 - 9k + 2 = 6k^{2}-6k.$$
Now we can describe the combinatorics of $\mathcal{B}_{6k}^{RS}$. By the construction, the vertex $O$ has multiplicity $6k^{2}-6k+3$, and this is the only point of such multiplicity. Next, we obtain quadruple points by joining the previous triple points with the vertex $O$, we have altogether
$$t_{4} = 6k^{2} - 3k + 1 - (9k - 5) = 6k^{2} - 12k + 6 = 6(k-1)^{2}.$$
We get also new triple point (out of old double points), there are exactly $6k-6$ such points. Altogether we have
$$t_{3} = 6k - 6 + 9k - 5 - 1 = 15k - 12,$$
where the last $-1$ in the middle equality comes from the fact that $O$ is no longer a triple point.
Finally, we can compute the number of double points.
Using the combinatorial count we obtain that
$$t_{2} = \frac{6k^{2}(6k^{2}-1)}{2} - 3\cdot (15k-12) - 6 \cdot(6(k-1)^{2}) - \binom{6k^{2}-6k+3}{2} = 36k^{3} - 72k^{2}+42k-3.$$
By the construction, $\mathcal{B}_{6k}^{RS}$ is supersolvable and by Jambu-Terao's result \cite{JT}, the arrangement is free. We compute the exponents of the arrangement with use of the Poincar\'e polynomial. Observe that
$$\pi(\mathcal{B}_{6k}^{RS};t) = (1+t)\bigg(1 + (6k^{2}-1)t + (36k^{3}-54k^{2}+30k-6)t^{2}\bigg).$$
Since $\triangle_{t} = (6k^{2}-12k+5)^{2}$ and $6k^{2}-12k+5$ is non-negative for $k\geq 2$, we can compute rational roots of the polynomial, namely
$$a_{1} = \frac{-6k^{2}+1 + 6k^{2} - 12k + 5}{12(2k-1)(3k^{2}-3k-1)} = \frac{-(2k-1)}{2(2k-1)(3k^{2}-3k+1)} = \frac{-1}{6k^{2}-6k+2},$$
$$a_{2} = \frac{-6k^{2}+1 - 6k^{2} + 12k - 5}{12(2k-1)(3k^{2}-3k-1)} = \frac{-12k^{2}+12k-4}{12(2k-1)(3k^{2}-3k+1)} = \frac{-1}{3(2k-1)}.$$
This gives us finally that
$$\pi(\mathcal{B}_{6k}^{RS};t) = (1+t)(1+(6k-3)t)(1+(6k^{2}-6k+2)t),$$
and the exponents are $d_{2} =6k-3$, $d_{3} = 6k^{2}-6k+2$.
\end{proof}

Now we turn to the Klein arrangement of lines $\mathcal{K}$ (see \cite{Klein}). Let us recall that the arrangement $\mathcal{K}$ consists of $d=21$ lines and $t_{3}=28$, $t_{4}=21$.
\begin{proposition}
For the Klein arrangement of lines $\mathcal{K}$ we have ${\rm ext SS}(\mathcal{K})\leq 20$.
\end{proposition}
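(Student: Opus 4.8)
The plan is to exploit the high degree of symmetry of $\mathcal{K}$ in order to turn one of its quadruple points into a \emph{modular point}, using the fact that a line arrangement in $\mathbb{P}^{2}_{\mathbb{C}}$ is supersolvable precisely when it possesses a modular point, i.e.\ an intersection point $O$ such that for every other singular point $q$ the joining line $\overline{Oq}$ already belongs to the arrangement. With this characterization, the cost of forcing a chosen point $O$ to become modular is exactly the number of new lines one must adjoin in order to connect $O$ to the singular points that are not yet joined to it by an arrangement line, and my aim is to show that for a suitable quadruple point this cost is at most $20$. This is the natural analogue, at a quadruple point, of the triple-point construction carried out above for the B\"or\"oczky arrangements.

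First I would record the incidence data. Counting flags through the singular points gives $3t_{3}+4t_{4}=3\cdot 28+4\cdot 21=168$ line--point incidences, so each of the $21$ lines carries on average $168/21=8$ singular points. Because the automorphism group of the Klein configuration (the simple group of order $168$) acts transitively on the $21$ lines, this distribution is in fact uniform: every line of $\mathcal{K}$ contains exactly $4$ triple points and $4$ quadruple points, as $3t_{3}/21=4$ and $4t_{4}/21=4$. This uniformity is the key combinatorial input, and I expect it to be the main point requiring care: one must invoke line-transitivity (or verify the distribution directly from the explicit configuration) to guarantee that a single quadruple point and its four lines capture as many singular points as the count below requires.

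Next I would fix a quadruple point $O$ and analyse the four lines of $\mathcal{K}$ passing through it. Any two of these four lines meet only at $O$, so away from $O$ the singular points lying on them are pairwise distinct. Each such line contains, besides $O$, three further quadruple points and four triple points, that is $7$ other singular points; hence the four lines together carry $1+4\cdot 7=29$ singular points of $\mathcal{K}$. Since $|{\rm Sing}(\mathcal{K})|=t_{3}+t_{4}=49$, exactly $49-29=20$ singular points fail to lie on any line through $O$.

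Finally I would build the resolution by adjoining, for each of these $20$ points $q$, the line $\overline{Oq}$, which is genuinely new since $q$ lies on none of the four original lines through $O$. After this step $O$ is modular: every original singular point is now joined to $O$ by an arrangement line, while every newly created intersection point lies on one of the added lines (the added lines meet each other only at $O$, and each added line meets an old line at a point lying on that added line through $O$) and is therefore collinear with $O$ through an arrangement line. The resulting arrangement is thus supersolvable, it is obtained from $\mathcal{K}$ by adding at most $20$ lines (fewer if some of the $20$ points happen to be collinear with $O$), and consequently ${\rm ext SS}(\mathcal{K})\leq 20$.
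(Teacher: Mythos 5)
Your proposal is correct and follows essentially the same route as the paper: choose a quadruple point $O$, use the fact that each of the $21$ lines carries exactly $4$ triple and $4$ quadruple points to count $4\cdot 8-3=29$ singular points on the four lines through $O$, and join the remaining $49-29=20$ singular points to $O$ by new lines. Your additional touches — deriving the uniform incidence distribution from line-transitivity of the automorphism group, and justifying supersolvability explicitly via the modular-point criterion — only make explicit what the paper leaves implicit.
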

\begin{proof}
Each line from the arrangement contains exactly $4$ triple and $4$ quadruple singular points. Choose one of the quadruple points and the four lines passing through it. Denoting this quadruple point by $O$, we observe that these four lines contain exactly $4\cdot 8 - 3 = 29$ singular points, so we are left with $12$ triple points and $8$ quadruple points. Next, we join each of the remaining $20$ singular points with $O$, so altogether our line arrangement $\mathcal{K}^{RS}$ consists of $21+20 = 41$ lines and the following intersection points:
$$t_{24} = 1, \quad t_{5} = 8, \quad t_{4} = 24, \quad t_{3}=16, \quad t_{2} = 272.$$
By the construction, $\mathcal{K}^{RS}$ is supersolvable, and we can compute the exponents. Observe that
$$\pi(\mathcal{K}^{RS};t) = (1+t)\bigg( 1+40t + 391t^{2}\bigg) = (1+t)(1+17t)(1+23t),$$
so the exponents are $d_{2} = 17$, $d_{3} = 23$.

\end{proof}
Finally, we consider the last arrangement of our interests, namely the Wiman arrangement of lines \cite{Wiman96}, denoted by $\mathcal{W}$. This remarkable arrangement consists of $45$ lines and it has
$$t_{3}=120, \quad t_{4} = 45, \quad t_{5}=36.$$
\begin{proposition}
For the Wiman arrangement of lines $\mathcal{W}$ one has ${\rm ext SS}(\mathcal{W})\leq 125.$
\end{proposition}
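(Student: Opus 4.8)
The plan is to mimic the successful constructions carried out for the Klein arrangement and for the B\"or\"oczky arrangements $\mathcal{B}_{6k}$. The strategy in each of those cases was to select a single singular point $O$ of high multiplicity, note that the lines through $O$ already account for many of the arrangement's singularities, and then erect a new line through $O$ and each remaining singular point; supersolvability is then immediate because $O$ becomes a modular point (every other singular point lies on a line through $O$), and the count of added lines gives the upper bound on ${\rm extSS}(\mathcal{W})$. So first I would record the total number of singular points of $\mathcal{W}$, namely $|{\rm Sing}(\mathcal{W})| = t_3 + t_4 + t_5 = 120 + 45 + 36 = 201$, which is the crude bound coming from the remark in the introduction; our goal is the sharper value $125$.

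Next I would choose $O$ to be one of the $t_5 = 36$ quintuple points, since taking the point of highest multiplicity maximizes the number of singularities we get ``for free'' along the five lines through $O$. The key step is to count how many distinct singular points lie on the union of these five lines. Using the symmetry of the Wiman arrangement one can determine how many triple, quadruple, and quintuple points sit on each of the five lines through $O$; call the number of singular points on this union (including $O$ itself) some value $s$. The lines through $O$ meet pairwise only at $O$, so by inclusion-exclusion $s = 1 + \sum_{\ell \ni O}(\,\#\{\text{singular points on }\ell\} - 1\,)$. Then the number of remaining singular points that must be joined to $O$ is $201 - s$, and this is exactly the number of lines we add, giving
$${\rm ext SS}(\mathcal{W}) \leq 201 - s.$$
To land on $125$ we need $s = 76$; I would verify this by establishing that each of the five lines through $O$ carries a fixed number of triple, quadruple and quintuple points (the regularity of $\mathcal{W}$ under its automorphism group makes each line incidence pattern identical, as was used implicitly for $\mathcal{K}$), and then summing.

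The main obstacle is precisely this incidence bookkeeping: unlike the Klein case, where ``each line contains exactly $4$ triple and $4$ quadruple points'' was a clean and well-documented fact, for the Wiman arrangement I must pin down exactly how the $t_3$, $t_4$, $t_5$ points distribute along an individual line, and in particular along the five lines passing through a chosen quintuple point, being careful not to double-count points where two of these five lines would otherwise seem to meet elsewhere (they do not, since their only common point is $O$). Once the per-line incidences are fixed, the computation of $s$ and hence of the added line count is a routine sum. Finally, I would confirm supersolvability of the resulting arrangement $\mathcal{W}^{RS}$ exactly as before---$O$ is a modular point through which every singularity is now resolved---and, if desired, check freeness and read off the exponents via Terao's factorization of the Poincar\'e polynomial, though only the upper bound ${\rm ext SS}(\mathcal{W}) \leq 125$ is asserted in the statement.
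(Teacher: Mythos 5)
Your proposal follows essentially the same route as the paper's proof: both select a quintuple point $O$, count the singular points lying on the five lines through $O$ (which meet pairwise only at $O$), find that there are $76$ of them, and then join $O$ to the remaining $201 - 76 = 125$ singular points, with supersolvability of the result coming from $O$ being modular. The per-line incidence data you leave as a step to verify is exactly what the paper asserts (each line of $\mathcal{W}$ carries $4$ quintuple, $4$ quadruple, and $8$ triple points), and it follows from the line-transitivity of the automorphism group together with double counting ($5\cdot 36/45 = 4$, $4\cdot 45/45 = 4$, $3\cdot 120/45 = 8$), which yields $s = 1 + 5\cdot 15 = 76$ as required.
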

\begin{proof}
Let us recall the most crucial fact about the singular points of Wiman's arrangement of lines. We observed that each line from the arrangement contains exactly $4$ quintuple, $4$ quadruple, and $8$ triple singular points. Choose one of the quintuple points and the five lines passing through this point.  Denoting this point by $O$, we observe that these five lines contain exactly $76$ singular points, so we are left with $80$ triple points, $25$ quadruple points, and $20$ quintuple points.  Next, we join each of the mentioned singular points with $O$, so altogether our line arrangement $\mathcal{W}^{RS}$ consists of $45 + 125 = 170$ lines and it has the following intersection points:
$$t_{130} = 1, \quad t_{6} = 20, \quad t_{5} = 40, \quad t_{4}=100, \quad t_{3} = 40, \quad t_{2} = 4560.$$
By the construction, $\mathcal{W}^{RS}$ is supersolvable, and we can compute the exponents. Observe that
$$\pi(\mathcal{W}^{RS};t) = (1+t)\bigg( 1+ 169t + 5160t^{2}\bigg) = (1+t)(1+40t)(1+129t),$$
so the exponents are $d_{2} = 40$, $d_{3} = 129$.
\end{proof}

\section{Supersolvability and unexpected curves}

In the last section, let us present another motivation that leads us to study the mentioned extensions to the supersolvability property. Very recently, the theory of unexpected curves has appeared and gained a lot of attention by researchers.

Let $\mathcal{P} = \{P_{1}, ..., P_{s}\} \subset \mathbb{P}^{2}_{\mathbb{C}}$ be a finite set of points and let $m_{1}, ..., m_{s}$ be the multiplicities of $\mathcal{P}$. Denote by $X = m_{1}P_{1} + ... + m_{s}P_{s}$ a fat point scheme and consider the associated ideal
	$$I(X) = \bigcap_{i=1}^{s} I(P_{i})^{m_{i}}.$$
	Now we can define the expected dimension by
$${\rm expdim}\, I(X)_{d} = \max \bigg\{ \binom{d+2}{2} -\sum_{i=1}^{s} \binom{m_{i}+1}{2} , 0 \bigg\},$$
where by $I(X)_{d}$ we mean the homogeneous component of degree $d$.
Geometrically speaking, the vector space $I(X)_{d}$ is the linear system of plane curves of degree $d$ passing through each point $P_{i}$'s with multiplicity at least $m_{i}$. The expected dimension informs us whether we can expect the existence of such curves, and in general ${\rm dim} \, I(X)_{d} \geq {\rm expdim} \,I(X)_{d}$.

\begin{definition}
Let $d$ be a non-negative integer. We say that a finite set of points $Z$ in the complex projective plane admits an unexpected curve in degree $d$ with a general point $P$ of multiplicity $d-1$ if
$${\rm dim} (I(Z + (d-1)P))_{d} > {\rm max }\bigg\{ {\rm dim} \, I(Z)_{d} - \binom{d}{2}, 0 \bigg\}.$$
\end{definition}
\begin{definition}
	We say that an arrangement of lines $\mathcal{L} \subset \mathbb{P}^{2}_{\mathbb{C}}$ admits an unexpected curve if the set of points $Z$ dual to the configuration of lines in  $\mathcal{L}$ admits an unexpected curve.
\end{definition}
In the context of supersolvable line arrangements, Di Marca, Malara, and Oneto \cite{diMarca} proved the following result.

\begin{theorem}
A supersolvable line arrangement $\mathcal{L}$ admits an unexpected curve if and only if $d > 2m$ where $d$ is the number of lines and $m$ is the maximum multiplicity of an intersection point of the lines in $\mathcal{L}$.
\end{theorem}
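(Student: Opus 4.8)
The plan is to pass to the dual point set $Z$ and reduce the statement to the generic splitting type of the logarithmic derivation bundle, for which supersolvability gives complete control. Throughout write $d=|\mathcal{L}|$ and let $m$ be the maximal multiplicity of an intersection point; dually, $m$ is the largest number of collinear points of $Z$. Let $\mathcal{T}$ be the rank-two bundle obtained by sheafifying $D(\mathcal{L})$ and splitting off the Euler derivation, so that $\mathcal{T}$ restricted to a general line has a well-defined splitting type $(a,b)$ with $a\leq b$ and $a+b=d-1$. The key external input is the criterion of Cook--Harbourne--Schenck--Teitler, on which Di Marca--Malara--Oneto build: it expresses unexpectedness of $Z$ purely in terms of $(a,b)$ together with $m$, in the form we need, $Z$ admits an unexpected curve if and only if $b\geq a+2$ and $m\leq a+1$. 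The whole problem thus becomes the determination of $(a,b)$ and its comparison with $m$.

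The structural heart of the argument is to show that a supersolvable $\mathcal{L}$ is free with exponents $(m-1,\,d-m)$. Freeness is immediate from Jambu--Terao \cite{JT}, so the content is the computation of the exponents. Here I would use the defining feature of supersolvability in the plane, namely the existence of a modular point $v$, an intersection point with the property that for every other intersection point $w$ the line $\overline{vw}$ belongs to $\mathcal{L}$. The pencil of the $m_v$ lines through $v$ exhibits $\mathcal{L}$ as a fiber-type arrangement, and the standard exponent formula for this situation gives exponents $(m_v-1,\,d-m_v)$; alternatively one reads them off the Poincar\'e polynomial via Terao's factorisation \cite{Terao}. The point that requires care is the identification $m_v=m$: I would prove that the maximal multiplicity is always attained at a modular point, by arguing that a hypothetical point of multiplicity exceeding $m_v$ would force every line of $\mathcal{L}$ not passing through $v$ to pass through it, which over-determines the configuration and contradicts modularity of $v$. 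This is the step I expect to be the main obstacle, since it is the one genuinely combinatorial input and the place where the embedding, rather than the bare lattice, enters.

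With the exponents in hand the conclusion is bookkeeping. For a free arrangement the sheaf $\mathcal{T}$ splits as a direct sum of line bundles on all of $\mathbb{P}^2_{\mathbb{C}}$, hence its splitting on every line is the same and equals the reduced exponents; thus $(a,b)=\big(\min(m-1,d-m),\ \max(m-1,d-m)\big)$. It then remains to check that $b\geq a+2$ and $m\leq a+1$ hold simultaneously precisely when $d>2m$. I would split into cases according to the sign of $(d-m)-(m-1)$. When $d\geq 2m-1$ one has $a=m-1$, so $m\leq a+1$ holds automatically (with equality), and $b-a=d-2m+1\geq 2$ is equivalent to $d>2m$; the borderline $d=2m-1$ gives $a=b$, so $b-a=0<2$ and no unexpected curve arises, consistently with $d\not>2m$. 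When $d\leq 2m-2$ one has $a=d-m$, and $m\leq a+1=d-m+1$ already fails, while indeed $d<2m$. Hence $Z$ is unexpected if and only if $d>2m$, which is the assertion.

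As a consistency check I would revisit the supersolvable resolutions of the previous section: for $\mathcal{B}_{6k}^{RS}$, $\mathcal{K}^{RS}$ and $\mathcal{W}^{RS}$ the distinguished modular point has very large multiplicity, so $d<2m$ in each case and the theorem predicts no unexpected curve, in agreement with the exponents computed there. This both fixes the inequality direction and ties the statement back to the constructions of the paper.
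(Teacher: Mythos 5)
A preliminary remark: the paper itself does not prove this statement at all — it is quoted from Di Marca--Malara--Oneto \cite{diMarca} and then used as a black box. So your attempt can only be measured against the argument in that reference, which your plan in fact closely mirrors: the Cook--Harbourne--Schenck--Teitler splitting-type criterion (which you state correctly), freeness of supersolvable arrangements via \cite{JT} with exponents $(m_v-1,\,d-m_v)$ read off a modular point $v$, the identification of these exponents with the generic splitting type for free arrangements, and the final arithmetic. All of those steps are fine, and your closing consistency check against $\mathcal{B}_{6k}^{RS}$, $\mathcal{K}^{RS}$, $\mathcal{W}^{RS}$ is genuinely confirming (in each case the exponents computed in the paper are exactly $(d-m,\,m-1)$).

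The genuine gap is precisely at the step you flag as the main obstacle, and your proposed argument for it fails. First, the statement "$m_v=m$" is false for an arbitrary modular point $v$: in a near-pencil of $d\geq 4$ lines, every double point on the transversal line is modular, so choosing $v$ there gives $m_v=2$ while $m=d-1$. The correct statement is that \emph{some} modular point attains the maximal multiplicity, and $v$ must then be chosen to be such a point. Second, your proposed proof — that a point $w$ with multiplicity exceeding $m_v$ "would force every line not through $v$ to pass through it, which over-determines the configuration and contradicts modularity of $v$" — is doubly flawed: the forcing claim is unjustified, and its conclusion is not a contradiction. The same near-pencil shows this: with $v$ a modular double point and $w$ the center, every line not through $v$ does pass through $w$, yet $v$ is perfectly modular; nothing is over-determined. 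The argument that actually works (and is the combinatorial core of \cite{diMarca}) is different: suppose $Q$ and $R$ are singular points \emph{not} joined by a line of $\mathcal{L}$, and let $P$ be modular. Fix a line $\mu\ni Q$ with $\mu\neq \overline{PQ}$; each of the $m_R-1$ lines through $R$ other than $\overline{PR}$ meets $\mu$ in a singular point distinct from $Q$ and off $\overline{PQ}\cup\overline{PR}$, and joining these points to $P$ produces $m_R-1$ pairwise distinct arrangement lines through $P$, none equal to $\overline{PQ}$ or $\overline{PR}$; hence $m_R\leq m_P-1$, and by symmetry $m_Q\leq m_P-1$. Consequently any singular point of multiplicity at least $m_P$ is itself modular, so a point of maximal multiplicity is modular. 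With that lemma supplied and $v$ chosen accordingly, the rest of your proof goes through as written.
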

This theorem provides as a very nice criterion for the existence of unexpected curves and once we are able to extend a well-known arrangement in such a way that the resulting object is supersolvable and satisfies the condition that $d>2m$, then we have a new example of an unexpected curve. Let us start with a baby-case of Fermat arrangements of lines.
\begin{example}
Fermat arrangement of lines $\mathcal{F}_{n}$ is defined in the complex projective plane by the linear factors of the following polynomial
$$F(x,y,z)=(x^{n} -y^{n})(y^{n}-z^{n})(z^{n}-x^{n}),$$
where $n\geq 3$.
It is well-known that the arrangement consists of $3n$ lines and $t_{n}=3$, $t_{3}=n^{2}$. It is easy to observe that this is not a supersolvable arrangement since the three fundamental points (which are the intersection points) cannot be joined by lines from the arrangements. One of the smallest extensions of Fermat arrangements looks as follows:
$$\widetilde{F}(x,y,z)=xy(x^{n} -y^{n})(y^{n}-z^{n})(z^{n}-x^{n}),$$
where as previously $n\geq 3$. The new arrangement consists of $3n+2$ lines and
$$t_{2} = 2n, \quad t_{3}=n^{2}, \quad t_{n+1}=2, \quad t_{n+2}=1.$$ This arrangement is clearly supersolvable and the exponents are $d_{1} = n+1$, $d_{2} = 2n$. Moreover, we see that $3n+2 = d > 2m = 2n+4$, since $n\geq 3$, so our new family of line arrangements, denoted in the literature by $\mathcal{A}_{3}^{2}(n)$, leads to new examples of unexpected curves. Moreover, one has
$${\rm extSS}(\mathcal{F}_{n}) = 2$$
provided that $n\geq 3$. Please note for $n=2$ the arrangement $\mathcal{F}_{2}$ is combinatorially equivalent to $\mathcal{B}_{6}$, thus supersolvable and  ${\rm extSS}(\mathcal{F}_{2}) = 0$.

\end{example}
\section*{Acknowledgments}
The author was partially supported by the National Science Centre, Poland, Preludium Grant \textbf{UMO 2018/31/N/ST1/02101}. 

\vskip 0.5 cm

\bigskip
Jakub Kabat,
Department of Mathematics,
Pedagogical University of Krakow,
ul. Podchorazych 2,
PL-30-084 Krak\'ow, Poland. \\
\nopagebreak
\textit{E-mail address:} \texttt{jakub.kabat@up.krakow.pl}


\begin{thebibliography}{000}

\bibitem{diMarca}
Di Marca, M., Malara, G., Oneto, A.:
Unexpected curves arising from special line arrangements. \textit{J. Algebr. Comb.} \textbf{51(2)}: 171 -- 194 (2020).
\bibitem{FuPa1984}
F{\"u}redi, Z., Pal\'asti, I.:
Arrangements of lines with a large number of triangles.
\textit{Proc. Amer. Math. Soc.} \textbf{92(4)}: 561 -- 566 (1984).
\bibitem{JT}
Jambu, M., Terao, H.:
Free arrangements of hyperplanes and supersolvable lattices. \textit{Adv. in Math.} \textbf{52(3)}: 248 -- 258 (1984).
\bibitem{Klein}
Klein, F.: \"Uber die Transformation siebenter Ordnung der elliptischen Functionen. \textit{Math. Annalen} \textbf{14}: 428 -- 471 (1879).
\bibitem{Terao}
Terao, H.: Generalized exponents of a free arrangement of hyperplanes and Shepard–Todd–Brieskorn formula. \textit{Invent. Math.} \textbf{63}: 159 -- 179 (1981).
\bibitem{Wiman96}
Wiman, A.: Zur Theorie der endlichen Gruppen von birationalen Transformationen in der Ebene. \textit{Math. Annalen} \textbf{48}: 195 -- 240 (1896).
\bibitem{Ziegler}
Ziegler, G.: Matroid representations and free arrangements. 
\textit{Trans. Am. Math. Soc.} \textbf{320(2)}: 525 -- 541 (1990).
\end{thebibliography}
\end{document}